\def\NAT@def@citea{\def\@citea{\NAT@separator}}
\theoremstyle{plain}
\newtheorem{theorem}{Theorem}[section]
\newtheorem{lemma}{Lemma}[section]
\newtheorem{proposition}{Proposition}[section]
\theoremstyle{definition}
\newtheorem{definition}{Definition}[section]
\newtheorem{example}{Example}[section]
\newtheorem{question}{Question}[section]
\theoremstyle{remark}
\newtheorem{remark}{Remark}[section]
\DeclareMathOperator{\Sol}{Sol}
\DeclareMathOperator{\Q}{Q}
\DeclareMathOperator{\Po}{P}
\DeclareMathOperator{\VI}{VI}
\DeclareMathOperator{\ri}{ri}
\DeclareMathOperator{\cl}{cl}
\DeclareMathOperator{\N}{\mathbb{N}}
\DeclareMathOperator{\R}{\mathbb{R}}
\DeclareMathOperator{\A}{\mathcal{A}}
\DeclareMathOperator{\B}{\mathcal{B}}
\DeclareMathOperator{\Ba}{\mathbb{B}}
\DeclareMathOperator{\VVI}{VVI}
\DeclareMathOperator{\inte}{int}
\begin{document}


\title{Disconnectedness and unboundedness of the solution sets of monotone vector variational inequalities}

\author{
\name{Vu Trung Hieu\textsuperscript{a}\thanks{CONTACT Vu Trung Hieu. Email: hieuvut@gmail.com}\thanks{Dedicated to my colleagues at Phuong Dong University, Associate Professor Phan Huu Huan and Associate Professor Nguyen Kim Vu, on the occasion of their retirement}}
\affil{\textsuperscript{a} Division of Mathematics, Phuong Dong University, Hanoi, Vietnam}
}

\maketitle

\begin{abstract} In this paper, we investigate the topological structure of solution sets of monotone vector variational inequalities. We show that if the weak Pareto solution set of a monotone vector variational inequality is disconnected, then each connected component of the set is unbounded. Similarly, this property holds for the proper Pareto solution set. Two open questions on the topological structure of the solution sets of (symmetric) monotone vector variational inequalities are raised at the end of the paper.
\end{abstract}

\begin{keywords}
Monotone vector variational inequality; solution set;  disconnectedness; unboundedness; scalarization formula
\end{keywords}

\section{Introduction}
The concept of vector variational inequality (VVI for short) was introduced by Giannessi in his well-known paper \cite{G80}. The class of monotone VVIs appeared in the research on convex vector optimization problems and linear fractional vector optimization
problems.
The topological structure of the solution sets of monotone VVIs was studied in \cite{Hieu19,LKLY98,LY2000,YP2000,YY2011}.

In \cite{LY2000}, the authors have proved that the weak Pareto solution set of a monotone VVI is connected if the constraint set is compact. By using a result on the solution stability of Robinson \cite{Robinson79} for monotone affine variational inequalities and a scalarization method, Yen and Yao \cite{YY2011} have shown that if the weak Pareto solution set of a monotone affine VVI is disconnected then each connected component of this set is unbounded. So, the set is connected if it is bounded and nonempty.  Similar assertions are valid for the Pareto solution set.

The present paper can be considered as a new attempt to develop and extend the results of Yen and Yao for general monotone VVIs. Based on scalarization formulae of VVIs \cite{LKLY98} and results of Facchinei and Pang \cite{FaPa03} on the solution stability of monotone variational inequalities, we prove that if the weak Pareto solution set of a monotone VVI is disconnected, then each connected component of this set is unbounded. The proper Pareto solution set of a monotone VVI has the same property. Consequently, when the constraint set is polyhedral convex, this property holds for the Pareto solution set. An open question on the property for the Pareto solution set of a monotone VVI is raised.

We have known that the weak Pareto solution set (the Pareto solution set, and the proper Pareto solution set) of a non-symmetric monotone VVI (see, e.g., Example \ref{example2}) could be disconnected.
Up to now, we have never known a symmetric monotone VVI whose weak Pareto solution set (or, Pareto solution set, proper Pareto solution set) is disconnected in the literature. The topological structure of the solution sets of non-symmetric monotone VVIs is more interesting than that of symmetric monotone VVIs.  At the end of the paper, we give another open question on the connectedness of the solution sets of symmetric monotone VVIs.

The remaining part of this paper consists of four sections. Section \ref{sec_pre} gives some definitions, notations, and auxiliary results on VVIs. Section \ref{sec_basic} establishes some technical results concerning the basic multifunctions of monotone VVIs. The main results on the disconnectedness and the unboundedness of the solution sets of monotone VVIs are shown in Section~\ref{sec_dis}. The last section gives two examples and presents two open questions.

\section{Preliminaries}\label{sec_pre}

The scalar product of $x$ and $y$ from $\R^n$ is denoted by $\langle x,y\rangle$. Let $K\subset\R^n$ be a nonempty closed convex set and  $F_l: K\to\mathbb R^n$ ($l=1,\dots, m$) be continuous vector-valued functions. We denote $F=(F_1,\dots, F_m)$ and
\begin{center}
	$F(x)(u)=( \langle F_1(x),u\rangle, \dots, \langle F_m(x),u\rangle), \ \forall x\in K,\ \forall u\in\R^n$.
\end{center}
Let $C=\R^m_+$ and
$$\Delta=\Big\{(\xi_1,\dots,\xi_m)\in \R^m_+\,:\,
\sum_{l=1}^m\xi_l=1\Big\},$$
where $\R^m_+$ is the nonnegative orthant of  $\R^m$. The relative interior of $\Delta$ is described by the formula $${\ri}\Delta=\{\xi\in\Delta\,:\, \xi_l>0,\ l=1,\dots,m\}.$$

\begin{definition}{\rm (see \cite{G80})}
	The problem
	$${ \VVI(F,K)} \qquad {\text{Find}}\ \, x\in K \ \; \text{such that}\ \, F(x)(y-x)\nleq_{C\setminus\{0\}}0,\ \, \forall y\in K,$$
	is said to be the \textit{vector variational inequality} defined by $F,K$ and $C$. The inequality means that
	$F(x)(x-y)\notin C\setminus\{0\}$.
	The solution set of $\VVI(F,K)$ is denoted by $\Sol(F,K)$ and called the \textit{Pareto solution set}.
\end{definition}

\begin{definition}{\rm (see \cite{ChYa90})}
	The problem
	$${\VVI^w(F,K)} \qquad {\text{Find}}\ \, x\in K \ \; \text{such that}\ \, F(x)(y-x)\nleq_{\inte C}0, \ \, \forall y\in K,$$
	where ${\inte}C$ is the interior of $C$ and the inequality means $F(x)(x-y)\notin {\inte}C$, is called the \textit{weak vector variational inequality} defined by $F,K$ and $C$. The solution set of $\VVI^w(F,K)$ is denoted by $\Sol^w(F,K)$ and called the \textit{weak Pareto solution set} of the problem $\VVI(F,K)$.
\end{definition}

For $m=1$, both $\VVI(F,K)$ and $\VVI^w(F,K)$ coincide with the
classical variational inequality problem
$$\VI(F,K) \qquad {\text{Find}}\ \, x\in K \ \; \text{such that}\ \, \langle F(x),y-x\rangle\geq 0,\ \, \forall y\in K.$$
Note that $x$ solves $\VI(F,K)$ if and only if
$F(x)\in -N_K(x),$
where $N_K(x)$ is the normal cone  of $K$ at $x\in K$,  which is defined by
$$
N_K(x)=\{x^*\in \R^n:\langle x^*,y-x\rangle\leq 0, \forall y\in K\}.
$$
Clearly, when $K=\R^n$, $x$ solves $\VI(F,K)$  if and only if $x$ is a zero point of the function $F$.

The solution sets of  $\VVI(F,K)$ can be computed  via certain unions of the solution sets of the parametric variational inequality $\VI(F_{\xi},K)$, where
$$F_{\xi}(x)= \sum_{l=1}^m\xi_lF_l(x),$$
 with $\xi\in \Delta$.

\begin{theorem} {\rm (see \cite{LKLY98}, \cite{LY01})}\label{thm_scalarization} It holds that
	\begin{equation}\label{scalarization}\bigcup_{\xi\in{\ri}\Delta}{\Sol(F_{\xi},K)}\subset {\Sol(F,K)}\subset {\Sol^w}{(F,K)}=\bigcup_{\xi\in\Delta}\Sol(F_{\xi},K). \end{equation}
	If $K$ is a polyhedral convex set, i.e.,
	$K$ is the intersection of finitely many closed half-spaces of
	$\mathbb R^n$, then the first inclusion in \eqref{scalarization}
	holds as equality.
\end{theorem}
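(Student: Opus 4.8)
The plan is to reduce the entire statement to elementary facts about a single auxiliary convex set in $\R^m$ and then to handle the one genuinely nontrivial inclusion by separation. Fix $x\in K$ and put
$$A \;=\; \{F(x)(y-x):y\in K\} \;=\; \{(\langle F_1(x),d\rangle,\dots,\langle F_m(x),d\rangle):d\in K-x\}\subset\R^m,$$
which is convex, contains $0$, and is the image of the convex set $K-x$ under a linear map. Unwinding the definitions (using that $F(x)(\cdot)$ is linear, so $F(x)(x-y)=-F(x)(y-x)$) yields three reformulations: $x\in\Sol^w(F,K)$ iff $A\cap(-\inte C)=\emptyset$; $x\in\Sol(F,K)$ iff $A\cap(-C)=\{0\}$ (the intersection always contains $0$); and $x\in\Sol(F_\xi,K)$ iff $\langle\xi,a\rangle\ge 0$ for all $a\in A$.

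With these in hand, three of the four claimed relations are immediate, and I would dispatch them first. The inclusion $\Sol(F,K)\subset\Sol^w(F,K)$ follows from $-\inte C\subset -(C\setminus\{0\})$. If $\xi\in\Delta$ is nonnegative on $A$, then it is strictly negative on $-\inte C$ (as $\xi\ge 0$, $\xi\neq 0$), so $A$ misses $-\inte C$; this gives $\bigcup_{\xi\in\Delta}\Sol(F_\xi,K)\subset\Sol^w(F,K)$. If moreover $\xi\in\ri\Delta$ (all $\xi_l>0$), then for $a\in A\cap(-C)$ one has $\langle\xi,a\rangle\le 0$ with equality only at $a=0$, forcing $A\cap(-C)=\{0\}$; this gives the first inclusion $\bigcup_{\xi\in\ri\Delta}\Sol(F_\xi,K)\subset\Sol(F,K)$.

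The only substantial step among the set relations is the reverse inclusion $\Sol^w(F,K)\subset\bigcup_{\xi\in\Delta}\Sol(F_\xi,K)$. Here $A$ is convex and disjoint from the nonempty open convex set $-\inte C$, so by the separation theorem there is a nonzero $\xi$ with $\sup_{w\in-\inte C}\langle\xi,w\rangle\le\inf_{a\in A}\langle\xi,a\rangle$. Because $-\inte C$ contains rays along which each coordinate tends to $-\infty$, boundedness above of $\langle\xi,\cdot\rangle$ forces $\xi\ge 0$, i.e. $\xi\in C\setminus\{0\}$; the supremum is then $0$, and since $0\in A$ the infimum is $\le 0$, so $\langle\xi,a\rangle\ge 0$ for all $a\in A$. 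Normalizing $\xi$ to lie in $\Delta$ yields $x\in\Sol(F_\xi,K)$, completing the equality.

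For the polyhedral refinement (equality in the first inclusion) I still need to upgrade $\xi$ from $\Delta$ to $\ri\Delta$, and this is where I expect the real difficulty: ordinary separation cannot deliver a strictly positive functional. Assume $K$ is polyhedral and $x\in\Sol(F,K)$, so $A$ is a polyhedral convex set with $A\cap(-C)=\{0\}$. The plan is to pass to the finitely generated cone $\widehat A=\R_+A$, which is a polyhedral (hence closed) convex cone satisfying $\widehat A\cap(-C)=\{0\}$, and for which $\langle\xi,a\rangle\ge 0$ on $A$ iff $\xi\in\widehat A^{*}:=\{\xi:\langle\xi,a\rangle\ge0\ \forall a\in\widehat A\}$. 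Using polyhedral cone duality, where no closure is needed, and the self-duality $C^{*}=C$ of $C=\R^m_+$, one gets $\R^m=\{0\}^{*}=(\widehat A\cap(-C))^{*}=\widehat A^{*}+(-C)^{*}=\widehat A^{*}-C$. Hence every vector, in particular any $v\in\inte C$, can be written $v=\eta-c$ with $\eta\in\widehat A^{*}$ and $c\in C$; then $\eta=v+c\in\inte C$, so $\widehat A^{*}\cap\inte C\neq\emptyset$. Normalizing such an $\eta$ produces $\xi\in\ri\Delta$ with $\langle\xi,a\rangle\ge 0$ on $A$, i.e. $x\in\Sol(F_\xi,K)$. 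The crux is precisely this polyhedral cone algebra, which converts the geometric condition $A\cap(-C)=\{0\}$ into the existence of a \emph{strictly} positive supporting functional; the convexity reformulations above are what make that algebra available.
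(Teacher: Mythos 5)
Your proof is correct, but note that the paper itself contains no proof of this theorem to compare against: the statement is quoted from the literature, namely from \cite{LKLY98} and \cite{LY01}, and the paper simply uses it as a known tool. So what you have produced is a self-contained reconstruction of those cited results, and it holds up under scrutiny. The three reformulations via the auxiliary set $A=\{F(x)(y-x):y\in K\}$ are exactly right ($x\in\Sol^w(F,K)$ iff $A\cap(-\inte C)=\emptyset$; $x\in\Sol(F,K)$ iff $A\cap(-C)=\{0\}$; $x\in\Sol(F_\xi,K)$ iff $\langle\xi,\cdot\rangle\ge 0$ on $A$), the three easy inclusions follow as you say, and the separation argument for $\Sol^w(F,K)\subset\bigcup_{\xi\in\Delta}\Sol(F_\xi,K)$ is the classical scalarization argument of \cite{LKLY98}: boundedness of $\langle\xi,\cdot\rangle$ above on $-\inte C$ forces $\xi\ge 0$, and the supremum there is $0$. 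The polyhedral case is the only genuinely delicate step, and your treatment is sound precisely where it needs to be: since $0\in A$ and $A$ is polyhedral (a linear image of the polyhedron $K-x$), the Minkowski--Weyl representation shows $\widehat A=\R_+A$ is finitely generated, hence closed and polyhedral --- this is where polyhedrality is indispensable, since $\R_+A$ can fail to be closed for a general convex $A$ --- and the identity $(\widehat A\cap(-C))^{*}=\widehat A^{*}+(-C)^{*}$ holds without a closure operation only because both cones are polyhedral. Writing any $v\in\inte C$ as $\eta-c$ with $\eta\in\widehat A^{*}$, $c\in C$ then gives $\eta=v+c\in\inte C$, i.e.\ the strictly positive weight whose normalization lies in $\ri\Delta$. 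This dual-cone route to the equality $\Sol(F,K)=\bigcup_{\xi\in\ri\Delta}\Sol(F_\xi,K)$ for polyhedral $K$ is a clean and arguably more transparent alternative to the original argument in \cite{LY01}; the only ingredients you lean on without proof (separation of disjoint convex sets, Minkowski--Weyl, polyhedrality of linear images, polyhedral dual-cone calculus) are standard convex-analysis facts that are entirely appropriate to cite.
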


If  $x\in \Sol(F_{\xi},K)$ for some $\xi\in\ri\Delta$, then $x$ is said to be a {\it proper Pareto solution} \cite{HYY2015b} of $\VVI(F,K)$. Here, $\Sol^{pr}(F,K)$ stands for the proper Pareto solution set of $\VVI(F,K)$. With this notion, we have
\begin{equation}\label{Sol_pr}
\Sol^{pr}(F,K)=\bigcup_{\xi\in{\ri}\Delta}{\Sol(F_{\xi},K)}.
\end{equation}

\begin{definition} {\rm (see \cite{LY2000})}
	One says that the problem $\VVI(F,K)$ is monotone if all the functions $F_l$, where $l = 1,\dots, m$, are monotone,  i.e., $$\langle  F_l(y)-F_l(x),y-x\rangle \geq 0$$ for all $x,y\in K$.
\end{definition}

Remind that if the problem $\VVI(F,K)$ is monotone, then the parametric variational inequality $\VI(F_{\xi},K)$ is monotone for any $\xi\in \Delta$. In this case, $\Sol(F_{\xi},K)$ is a convex set.

Let $X$ be a subset of $\R^n$. Recall that $X$ is \textit{connected} if there does not exist two nonempty disjoint subsets $A,B$ of $X$ and two open subsets $U,V$ in $\R^n$ such that $A\subset U$, $B\subset V$ and $U\cap V=\emptyset$.
A nonempty subset $A\subset X$ is said to be a \textit{connected component} of  $X$ if $A$ is connected and it is not a proper subset of any connected subset of $X$. Remind that when $X$ is connected, if the set $A\subset X$ is closed and open in $X$ then $A=X$. The closure $\cl(X)$ is connected when $X$ is connected.

\section{Basic multifunctions}\label{sec_basic}

The \textit{basic multifunction} associated to the problem $\VVI(F,K)$ is defined by
$$S:\Delta\rightrightarrows \R^n, \ S(\xi)=	\Sol(F_{\xi},K).$$
Theorem \ref{thm_scalarization} yields $\Sol^{w}(F,K)=S(\Delta)$ and $\Sol^{pr}(F,K)=S(\ri\Delta)$. Hence, one can use the basic multifunction $S$ to investigate different properties of the solution sets of $\VVI(F,K)$. The inverse multifunction $S^{-1}: \R^n \rightrightarrows  \Delta $ is defined by
$$S^{-1}(x)=\{\xi\in \Delta: x\in S(\xi)\}, \ x\in \R^n.$$
If $\A$ is a subset in $\R^n$, then the inverse image of $\A$ by $S$ is the following set
\begin{equation}\label{S_1}
S^{-1}(\A)=\{\xi\in\Delta:S(\xi)\cap \A\neq \emptyset\}.
\end{equation}

\begin{proposition}\label{A_compact}
	Let $\A$ be a subset of the weak Pareto solution set. If $\A$ is compact, then $S^{-1}(\A)$ is closed.
\end{proposition}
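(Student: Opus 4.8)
The plan is to verify the closedness of $S^{-1}(\A)$ sequentially, which is legitimate since $\Delta$ is a compact subset of $\R^m$. Thus I would fix an arbitrary sequence $\{\xi_k\}\subset S^{-1}(\A)$ with $\xi_k\to\xi$, and aim to show $\xi\in S^{-1}(\A)$. Note that $\xi\in\Delta$ is automatic because $\Delta$ is closed. Unpacking the definition of $S^{-1}(\A)$, for each $k$ there exists a point $x_k\in\A$ with $x_k\in S(\xi_k)=\Sol(F_{\xi_k},K)$, i.e.
$$\langle F_{\xi_k}(x_k),\, y-x_k\rangle\geq 0,\qquad \forall y\in K,$$
where $F_{\xi_k}(x_k)=\sum_{l=1}^m (\xi_k)_l\, F_l(x_k)$.

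Next I would exploit the compactness of $\A$: since $\{x_k\}\subset\A$, there is a subsequence $x_{k_j}\to \bar x$ with $\bar x\in\A$. The decisive step is then to pass to the limit in the variational inequality along this subsequence. Because each $F_l$ is continuous on $K$ (the standing assumption of Section 3) and because $\xi_{k_j}\to\xi$ together with $x_{k_j}\to\bar x$, the vector $F_{\xi_{k_j}}(x_{k_j})=\sum_{l=1}^m (\xi_{k_j})_l\, F_l(x_{k_j})$ converges to $F_\xi(\bar x)=\sum_{l=1}^m \xi_l\, F_l(\bar x)$. Fixing an arbitrary $y\in K$ and letting $j\to\infty$ in the inequality $\langle F_{\xi_{k_j}}(x_{k_j}),\, y-x_{k_j}\rangle\geq 0$ yields $\langle F_\xi(\bar x),\, y-\bar x\rangle\geq 0$. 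Since $y\in K$ was arbitrary, $\bar x\in\Sol(F_\xi,K)=S(\xi)$; combined with $\bar x\in\A$ this gives $S(\xi)\cap\A\neq\emptyset$, i.e. $\xi\in S^{-1}(\A)$, as required.

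The substantive point — and the only place where real care is needed — is the joint passage to the limit: one must argue that the map $(\xi,x)\mapsto F_\xi(x)=\sum_l \xi_l F_l(x)$ is continuous so that both arguments $\xi_{k_j}\to\xi$ and $x_{k_j}\to\bar x$ may be driven to their limits simultaneously inside the scalar product. I expect this to be routine given the continuity of the $F_l$ and the boundedness of the coefficients $(\xi_{k_j})_l\in[0,1]$, but it is the heart of the argument. I note in passing that monotonicity of the $\VVI(F,K)$ is not needed for this proposition; continuity of the component functions and compactness of $\A$ suffice. The hypothesis $\A\subset\Sol^w(F,K)$ is likewise not essential for closedness itself, but it guarantees that $S^{-1}(\A)$ is the natural set of scalarization parameters whose solution components meet $\A$.
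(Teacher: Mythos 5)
Your proof is correct and follows essentially the same route as the paper's own argument: extract a convergent (sub)sequence from $\A$ by compactness, pass to the limit in the variational inequality $\langle F_{\xi^k}(x^k),y-x^k\rangle\geq 0$ using continuity, and conclude $\bar x\in S(\bar\xi)\cap\A$. If anything, you are slightly more careful than the paper in spelling out the joint continuity of $(\xi,x)\mapsto F_\xi(x)$, and your side remarks (monotonicity unused, the hypothesis $\A\subset\Sol^w(F,K)$ inessential) are accurate.
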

\begin{proof} Suppose that $\A$ is compact.	Let $\{\xi^k\}\subset S^{-1}(\A)$ be a convergent sequence and $\xi^k\to\bar\xi$. There exists a sequence $\{x^k\}\subset \A$ such that $x^k\in S(\xi^k)$ for every $k\in\N$. By the compactness of $\A$, without loss of generality, we can assume that  $x^{k} \to \bar x$ and $\bar x\in \A$. By definition, one has	$$\left\langle  F_{\xi^k}(x^{k}),y-x^{k}\right\rangle\geq 0, \  \forall y\in K.$$
From the last result, for any $y\in K$,	letting $k\to \infty$, we have $F_{\xi^k}(x^k)\to F_{\bar \xi}(\bar x)$ and
	$$\left\langle F_{\bar\xi}(\bar x),y-\bar x\right\rangle  \geq 0.$$ This means that $\bar x\in  S(\bar\xi)$. We thus get $\bar x\in  \A\cap S(\bar\xi)\neq\emptyset$, and then $\bar\xi\in S^{-1}(\A)$. Hence, the inverse image $S^{-1}(\A)$ is closed.
\end{proof}

\begin{proposition}\label{AB_disjoint} Assume that $\VVI(F,K)$ is monotone. If $\A$ and $\B$ are different connected components of the weak Pareto solution set (or the proper Pareto solution set), then  \begin{equation}\label{empty}
	S^{-1}(\A)\cap S^{-1}(\B)=\emptyset.
\end{equation}
\end{proposition}
\begin{proof}
	Suppose that $\A$ and $\B$ are different connected components of $\Sol^w(F,K)$. On the contrary, there exists
	$\xi\in S^{-1}(\A)\cap S^{-1}(\B).$
It follows that $S(\xi)\cap \A\neq \emptyset$ and $S(\xi)\cap \B\neq \emptyset$. The monotonicity of $F$ implies the connectedness of $S(\xi)$. Since $\A,\B$ are connected components, one has $S(\xi)\subset \A$ and $S(\xi)\subset \B$. This yields $$S(\xi)\subset \A\cap\B\neq\emptyset,$$ which contradicts to the assumption that $\A,\B$ are different. Thus \eqref{empty} has been proved.

The proof for the proper Pareto solution set is similar.
\end{proof}

Let $Q:K\to\R^n$ be a given continuous function, and $\varepsilon$ be a given positive number. We denote by $\Ba(Q,\varepsilon,K)$ the set of all continuous functions $G$ satisfying
$$\sup_{x\in K}\|G(x)-Q(x)\|< \varepsilon.$$

Based on the results \cite[Proposition 5.5.3]{FaPa03} and \cite[Theorem 5.5.15]{FaPa03} on the solution stability of monotone variational inequalities, we obtain the following lemma which will be used in the proof of Proposition \ref{A_open}.

\begin{lemma}\label{FaPa}
	Assume that $\VI(Q,K)$ is monotone and $\Sol(Q,K)$ is nonempty and bounded.  For any open set $U$ containing $\Sol(Q,K)$, there exists $\varepsilon>0$ such
	that $\Sol(G,K)\cap U$ is nonempty for every function $G$ satisfying
\begin{equation}\label{G}
	G\in \Ba(Q,\varepsilon,K\cap \cl(U)).
\end{equation}
\end{lemma}
\begin{proof} Let $U$ be an open set and $U\supset\Sol(Q,K)$. Since the last one is nonempty and bounded, according to \cite[Theorem 5.5.15]{FaPa03},  there exists $\delta>0$ such
	that $\Sol(G,K)\cap U$ is nonempty for every continuous function $G$ satisfying
	$$\|G(x)-Q(x)\|<\delta(1+\|x\|), \ \forall x\in K\cap \cl(U).$$
Therefore, from \cite[Proposition 5.5.3]{FaPa03}, there exists $\varepsilon>0$ such
	that $\Sol(G,K)\cap U$ is nonempty for every function $G$ satisfying \eqref{G}. The proof is complete.
\end{proof}

\begin{proposition}\label{A_open} Assume that $\VVI(F,K)$ is monotone. The following assertions are valid:
	\begin{description}
		\item[\rm (a)] If $\A$ is a bounded connected component of the weak Pareto solution set,  then $S^{-1}(\A)$ is open in $\Delta$.
		\item[\rm (b)] If $\A$ is a bounded connected component of the proper Pareto solution set,  then $S^{-1}(\A)$ is open in $\ri\Delta$.
	\end{description}
\end{proposition}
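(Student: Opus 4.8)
The plan is to show that $S^{-1}(\A)$ contains a relative neighbourhood of each of its points. Fix $\bar\xi\in S^{-1}(\A)$, so that $S(\bar\xi)\cap\A\neq\emptyset$. Since $\VVI(F,K)$ is monotone, $S(\bar\xi)=\Sol(F_{\bar\xi},K)$ is convex, hence connected; as it is contained in $\Sol^w(F,K)$ by Theorem \ref{thm_scalarization} and meets the connected component $\A$, it must lie entirely inside $\A$. Thus $S(\bar\xi)\subseteq\A$ is bounded, and being the solution set of a continuous $\VI$ it is also closed and nonempty, hence compact. This is exactly the hypothesis needed to invoke the stability Lemma \ref{FaPa} with $Q=F_{\bar\xi}$.

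First I would record that $\Sol^w(F,K)$ is closed. The graph of $S$ is closed in $\Delta\times\R^n$: the limiting argument in the proof of Proposition \ref{A_compact}, carried out without using compactness of $\A$, shows that $(\xi^k,x^k)\to(\bar\xi,\bar x)$ with $x^k\in S(\xi^k)$ forces $\bar x\in S(\bar\xi)$. Since the projection of a closed set along the compact factor $\Delta$ is a closed map, $S(\Delta)=\Sol^w(F,K)$ is closed, and therefore the bounded component $\A$ is compact.

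The heart of the proof is a localisation: I would construct a bounded open set $U\subseteq\R^n$ with $S(\bar\xi)\subseteq U$ and, crucially, $U\cap\Sol^w(F,K)\subseteq\A$. Granting such a $U$, Lemma \ref{FaPa} yields $\epsilon>0$ with $\Sol(G,K)\cap U\neq\emptyset$ for every $G\in B(F_{\bar\xi},\epsilon,K\cap U)$. For $\xi\in\Delta$ one has $F_\xi-F_{\bar\xi}=\sum_{l=1}^m(\xi_l-\bar\xi_l)F_l$, and as $U$ is bounded the continuous functions $F_l$ are bounded on the compact set $\cl(K\cap U)$ by some $M$; hence $\sup_{x\in K\cap U}\|F_\xi(x)-F_{\bar\xi}(x)\|\le M\sum_{l=1}^m|\xi_l-\bar\xi_l|<\epsilon$ whenever $\xi$ is close enough to $\bar\xi$. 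For such $\xi$ we get $F_\xi\in B(F_{\bar\xi},\epsilon,K\cap U)$ and thus $S(\xi)\cap U\neq\emptyset$; since $S(\xi)\subseteq\Sol^w(F,K)$, this gives $\emptyset\neq S(\xi)\cap U\subseteq\Sol^w(F,K)\cap U\subseteq\A$, that is $\xi\in S^{-1}(\A)$. Hence a whole relative neighbourhood of $\bar\xi$ lies in $S^{-1}(\A)$, which proves (a).

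The main obstacle is precisely the construction of $U$: requiring $U\cap\Sol^w(F,K)\subseteq\A$ amounts to separating the bounded component $\A$ from the (relatively closed) remainder $\Sol^w(F,K)\setminus\A$, and one checks that this separation is \emph{equivalent} to the openness we are after, since if some other component accumulated onto $\A$ then $S^{-1}(\A)$ would fail to be open at the limiting parameter. I would attack it inside a large closed ball containing $\A$ in its interior, where the solution set becomes a compact space and $\A$ one of its components; a component of a compact space is the intersection of the clopen sets containing it, and a finite–intersection argument separates $\A$ from the compact part lying near the sphere by a clopen set realised as $W\cap\Sol^w(F,K)$ for a bounded open $W$. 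The delicate point is to sharpen this neighbourhood so that it captures \emph{only} $\A$; this is where monotonicity together with the persistence of solutions in Lemma \ref{FaPa} must be used to exclude other components approaching the bounded one, and I expect it to be the genuinely hard step. Part (b) follows by the same scheme with $\Delta$ replaced by $\ri\Delta$ and $\Sol^w(F,K)$ by $\Sol^{pr}(F,K)=S(\ri\Delta)$, the openness now being relative to $\ri\Delta$; here the separation must be carried out inside $\Sol^{pr}(F,K)$, which need not be closed because $\ri\Delta$ is not compact.
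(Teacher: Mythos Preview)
Your argument is essentially the paper's. Both fix $\eta\in S^{-1}(\A)$, observe that $S(\eta)\subset\A$ is convex and bounded, choose an open set $U$ containing $\A$ and disjoint from $\Sol^w(F,K)\setminus\A$, invoke Lemma~\ref{FaPa} with $Q=F_\eta$ to obtain $\epsilon>0$, and then show that for $\xi$ close to $\eta$ one has $F_\xi\in B(F_\eta,\epsilon,K\cap U)$, hence $\emptyset\neq S(\xi)\cap U\subset\A$, so $\xi\in S^{-1}(\A)$. Part~(b) is handled identically with $\ri\Delta$ in place of $\Delta$. One small practical difference: you take $U$ bounded so that $\sup_{K\cap U}\|F_\xi-F_\eta\|\le M\sum_l|\xi_l-\eta_l|$ is immediate, whereas the paper defines $O=\{\xi\in\Delta:F_\xi\in B(F_\eta,\epsilon,K\cap U)\}$ and argues that its complement is closed by passing to the limit; both reach the same conclusion.

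The substantive divergence is at the step you single out. The paper simply \emph{asserts} the existence of disjoint open sets $U\supset\A$ and $V\supset(\Sol^w(F,K)\setminus\A)$, treating it as a direct consequence of $\A$ being a connected component; no argument is given. So the construction you labour over with clopen neighbourhoods inside a large ball is not carried out in the paper at all, and your compact-space attempt remains incomplete in the same place. Your further claim that this separation is \emph{equivalent} to the openness of $S^{-1}(\A)$ in $\Delta$ overshoots: what is needed is openness of $\A$ in $\Sol^w(F,K)$, which is a statement about the solution set in $\R^n$, not about the parameter set, and the heuristic ``if another component accumulated onto $\A$ then $S^{-1}(\A)$ would fail to be open at the limiting parameter'' is not a proof of that equivalence. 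In short, you have reproduced the paper's proof while correctly noticing, and then getting stuck on, a step the paper takes for granted.
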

\begin{proof} $\rm(a)$ Suppose that $\A$ is a bounded connected component of $\Sol^w(F,K)$.
There exists an open set $U$ in $\R^n$ such that $U$ is bounded and $\A \subset U$.

	Let $\eta$ be a fixed point in $S^{-1}(\A)$. Clearly, $\Sol(F_\eta,K)$ is a nonempty and bounded subset of $\A$.
	By the monotonicity of $F_{\eta}$,  for the open set $U$ that  contains $\Sol(F_\eta,K)$, according to Lemma \ref{FaPa}, there exists $\varepsilon >0$ such that
	\begin{equation}\label{Sol_nonempty}
	\Sol(G,K)\cap U\neq \emptyset, \; \ \forall G \in \Ba(F_{\eta},\varepsilon,K\cap\cl(U)).
	\end{equation}
	Taking
	\begin{equation}\label{O}
	O_{\eta}=\left\lbrace \xi\in \Delta: F_{\xi} \in \Ba(F_{\eta},\varepsilon,K\cap\cl(U))\right\rbrace,
	\end{equation}
	we conclude that $O_{\eta}$ is nonempty and open in $\Delta$. Indeed, we have $\eta\in O_{\eta}$ and thus $O_{\eta}\neq \emptyset$.	Let $\{\xi^k\}$ be convergent sequence in the complement $\Delta\setminus O_{\eta}$ and $\xi^k\to\bar \xi$. One has
	$$\sup_{x\in K\cap \cl(U)}\|F_{\xi^k}(x)-F_{\eta}(x)\|\geq \varepsilon.$$
	Taking $k\to+\infty$,
	it follows that $F_{\xi^k}\to F_{\bar\xi}$ on $K\cap\cl(U)$ and
	$$\sup_{x\in K\cap \cl(U)}\|F_{\bar\xi}(x)-F_{\eta}(x)\|\geq \varepsilon.$$
We thus get $\bar \xi\in \Delta\setminus O_{\eta}$. From this, $O_{\eta}$ is open in $\Delta$.

	We will show that $O_{\eta}\subset S^{-1}(\A)$. For any $\xi\in O_{\eta}$, from \eqref{O} and \eqref{Sol_nonempty}, one has
	\begin{equation}\label{SolFxi}
	F_{\xi} \in \Ba(F_{\eta},\varepsilon,K\cap\cl(U)), \ \; \Sol(F_{\xi},K)\cap U\neq \emptyset.
	\end{equation}
We claim that	 the nonempty convex set $\Sol(F_{\xi},K)$ does not intersect with $\Sol^w(F,K)\setminus\A$. Indeed, on the contrary, if it were true that
	$$\Sol(F_{\xi},K)\cap\left(\Sol^w(F,K)\setminus\A\right) \neq\emptyset,$$
we would have
	$$\Sol(F_{\xi},K)\subset\left(\Sol^w(F,K)\setminus\A\right) .$$
	This leads to $\Sol(F_{\xi},K)\cap U=\emptyset$, contrary to \eqref{SolFxi}. Hence, we get $\Sol(F_{\xi},K)\subset \A$. Then the desired conclusion is proved.

	Remind that $\eta$ is an arbitrary point taken in $S^{-1}(\A)$. From what has already been proved, $S^{-1}(\A)$ is open in the
	induced topology of $\Delta$.

	$\rm(b)$ We apply the argument in the proof of $\rm(a)$ again, with $\Delta$ replaced by $\ri\Delta$.
\end{proof}

\section{Disconnectedness and unboundedness}\label{sec_dis}
In this section, we develop and extend the results of Yen and Yao in \cite{YY2011} for general monotone VVIs.

\subsection{Weak Pareto solution sets}

\begin{lemma}\label{A_closed} If $\A$ is a connected component of $\Sol^w(F,K)$, then $\A$ is closed.
\end{lemma}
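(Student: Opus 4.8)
We need to show that a connected component $\A$ of the weak Pareto solution set $\Sol^w(F,K)$ is closed. The natural strategy is to prove that the closure $\cl(\A)$ is itself a connected subset of $\Sol^w(F,K)$ that contains $\A$; since $\A$ is a connected component (hence maximal among connected subsets), this forces $\cl(\A)=\A$, which is exactly closedness. Two facts must therefore be established: first that $\cl(\A)$ is connected, and second that $\cl(\A)\subset \Sol^w(F,K)$. The first is immediate from the final remark of Section~2, which records that the closure of a connected set is connected. So the entire weight of the argument rests on the second containment.

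To prove $\cl(\A)\subset \Sol^w(F,K)$, the plan is to use the scalarization formula from Theorem~\ref{thm_scalarization}, namely $\Sol^w(F,K)=S(\Delta)=\bigcup_{\xi\in\Delta}\Sol(F_\xi,K)$, together with a closedness argument for $S(\Delta)$. I would take an arbitrary point $\bar x\in\cl(\A)$ and a sequence $\{x^k\}\subset\A\subset\Sol^w(F,K)$ with $x^k\to\bar x$. For each $k$, scalarization gives some $\xi^k\in\Delta$ with $x^k\in S(\xi^k)=\Sol(F_{\xi^k},K)$, so that
$$
\left\langle F_{\xi^k}(x^k),\,y-x^k\right\rangle\geq 0,\qquad\forall y\in K.
$$
Because $\Delta$ is compact, after passing to a subsequence I may assume $\xi^k\to\bar\xi\in\Delta$. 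Using continuity of the $F_l$ on $K$ and the fact that $F_{\xi^k}\to F_{\bar\xi}$ uniformly on the relevant set (the coefficients converge and the functions are fixed), letting $k\to\infty$ yields $\langle F_{\bar\xi}(\bar x),y-\bar x\rangle\geq 0$ for all $y\in K$. Hence $\bar x\in\Sol(F_{\bar\xi},K)=S(\bar\xi)\subset\Sol^w(F,K)$, giving the desired containment. This mirrors exactly the limiting argument already carried out in the proof of Proposition~\ref{A_compact}.

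The step I expect to require the most care is the passage to the limit in the inequality. The subtlety is that $\bar x$ is only a limit point, so I must ensure that $F_{\xi^k}(x^k)\to F_{\bar\xi}(\bar x)$; this combines the convergence $x^k\to\bar x$ with the continuity of each $F_l$ on $K$ (note $\bar x\in K$ since $K$ is closed and each $x^k\in K$) and the convergence $\xi^k\to\bar\xi$ of the barycentric coefficients. One should confirm that the convergence $F_{\xi^k}\to F_{\bar\xi}$ can be handled pointwise at $\bar x$ rather than needing uniformity, which is cleaner here since we only evaluate at the single limit point $\bar x$; then $\langle F_{\xi^k}(x^k),y-x^k\rangle\to\langle F_{\bar\xi}(\bar x),y-\bar x\rangle$ follows for each fixed $y\in K$. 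Notably, this argument does not require the monotonicity hypothesis nor any boundedness of $\A$, so the lemma holds in full generality, which is consistent with its unconditional statement.
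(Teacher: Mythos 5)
Your proof is correct and takes essentially the same route as the paper: the scalarization formula of Theorem~\ref{thm_scalarization}, compactness of $\Delta$, and a pointwise limit passage show that every limit point of $\A$ lies in $\Sol^w(F,K)$, after which maximality of the connected component forces closedness. The only cosmetic difference is that you absorb the whole closure $\cl(\A)$ at once, whereas the paper adjoins a single limit point $\bar x$ and uses the connectedness of $\A\cup\{\bar x\}$; the two wrap-ups are equivalent.
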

\begin{proof} To prove the closedness of $\A$, let $\{x^k\}$ be a convergent sequence of points in $\A$ with  $x^k\to\bar x\in K$, we will show that $\bar x\in \A$. According to Theorem \ref{thm_scalarization},  there exists a sequence $\{\xi^{k}\} \subset \Delta$ such that $x^k\in\Sol(F_{\xi^k},K)$ for every $k\in\N$. By the compactness of $\Delta$, we can assume that  $\xi^{k} \to \bar\xi$ and $ \bar\xi\in \Delta$. So, for any $y\in K$, one has	\begin{equation}\label{Fxik}
	\left\langle  F_{\xi^k}(x^{k}),y-x^{k}\right\rangle\geq 0.
	\end{equation}
From \eqref{Fxik}, letting $k\to \infty$, because of $F_{\xi^k}(x^k)\to F_{\bar \xi}(\bar x)$, one has
	$\left\langle F_{\bar\xi}(\bar x),y-\bar x\right\rangle  \geq 0.$ Therefore, $\bar x$ is an weak Pareto solution.
	Since $\A$ is connected, the union $\A\cup\{\bar x\}$ is connected. The maximal connectedness of $\A$  in $\Sol^w(F,K)$ implies that $\A\cup\{\bar x\}\subset \A$, hence that  $\bar x\in\A$. The assertion has been proved.
\end{proof}

\begin{theorem}\label{disconnected_w} Assume that $\VVI(F,K)$ is monotone.  If the weak Pareto solution set is disconnected, then each connected component of this set is unbounded.
\end{theorem}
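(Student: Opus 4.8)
The plan is to argue by contradiction through a connectedness argument on the parameter simplex $\Delta$, using the basic multifunction $S$ as the bridge between the structure of $\Sol^w(F,K)$ and that of $\Delta$. Suppose $\Sol^w(F,K)$ is disconnected yet admits a \emph{bounded} connected component $\A$. The first step is to record that $\A$ is in fact compact: Lemma \ref{A_closed} guarantees that every connected component of $\Sol^w(F,K)$ is closed, and closedness together with the assumed boundedness gives compactness.

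The second step is to show that $S^{-1}(\A)$ is a nonempty subset of $\Delta$ that is simultaneously open and closed. Openness in $\Delta$ is precisely Proposition \ref{A_open}(a), which applies because $\A$ is a bounded connected component. Closedness is Proposition \ref{A_compact} applied to the compact set $\A\subset\Sol^w(F,K)$. Nonemptiness is immediate from the scalarization identity $\Sol^w(F,K)=S(\Delta)$ in Theorem \ref{thm_scalarization}: choosing any $x\in\A$ yields some $\xi\in\Delta$ with $x\in S(\xi)$, so $\xi\in S^{-1}(\A)$.

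The third step exploits that $\Delta$ is connected. A nonempty subset that is both open and closed in a connected space must equal the whole space, so $S^{-1}(\A)=\Delta$. Here is where the disconnectedness hypothesis enters: since $\Sol^w(F,K)$ is disconnected, it has a connected component $\B$ distinct from $\A$, and the same nonemptiness argument gives $S^{-1}(\B)\neq\emptyset$. Picking any $\xi'\in S^{-1}(\B)$, we have $\xi'\in\Delta=S^{-1}(\A)$, hence $\xi'\in S^{-1}(\A)\cap S^{-1}(\B)$. This contradicts Proposition \ref{AB_disjoint}, which asserts that $S^{-1}(\A)\cap S^{-1}(\B)=\emptyset$ for distinct components. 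Therefore no bounded connected component can exist, i.e., every connected component of $\Sol^w(F,K)$ is unbounded.

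All of the analytic difficulty has already been absorbed into the preceding propositions (the stability Lemma \ref{FaPa} feeding the openness in Proposition \ref{A_open}, and the limiting argument behind the closedness in Proposition \ref{A_compact}), so the proof itself is short and combinatorial. The point that most needs care is the clopen bookkeeping: one must pair openness (Proposition \ref{A_open}) with closedness (Proposition \ref{A_compact}) to force $S^{-1}(\A)=\Delta$, and then invoke disconnectedness at exactly the right moment to manufacture the second component $\B$, whose nonempty preimage contradicts the disjointness in Proposition \ref{AB_disjoint}. I do not anticipate a genuine obstacle beyond assembling these pieces in the correct order.
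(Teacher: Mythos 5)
Your proof is correct and takes essentially the same route as the paper's: compactness of $\A$ via Lemma \ref{A_closed}, the clopen property of $S^{-1}(\A)$ via Propositions \ref{A_compact} and \ref{A_open}, connectedness of $\Delta$ forcing $S^{-1}(\A)=\Delta$, and the final contradiction with the disjointness from Proposition \ref{AB_disjoint}. Your explicit check that $S^{-1}(\A)$ and $S^{-1}(\B)$ are nonempty (via the scalarization identity $\Sol^w(F,K)=S(\Delta)$) is a small point the paper leaves implicit, and it is a welcome addition.
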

\begin{proof} Suppose that $\Sol^w(F,K)$ is disconnected. On the contrary, suppose that $\A$ is a bounded connected component of this solution set.  According to Lemma \ref{A_closed}, $\A$ is compact. Applying Proposition \ref{A_compact}, we claim that $S^{-1}(\A)$ is closed. Besides, Proposition \ref{A_open} asserts that $S^{-1}(\A)$ is open in $\Delta$. Hence, $S^{-1}(\A)$ not only is closed but also is open in the induced topology of $\Delta$. Due to the connectedness of $\Delta$, we obtain $S^{-1}(\A)=\Delta$.

For any different connected component $\B$ of $\Sol^w(F,K)$, from Proposition \ref{AB_disjoint}, one has $S^{-1}(\A)\cap S^{-1}(\B)= \emptyset.$	This leads to the contradiction
	$$\emptyset=\Delta\setminus S^{-1}(\A)\supset S^{-1}(\B)\neq \emptyset.$$
	So, $\A$ must be unbounded, and the proof is complete.
\end{proof}

\begin{theorem}\label{connected_w} Assume that $\VVI(F,K)$ is monotone. If the weak Pareto solution set  is bounded and nonempty, then the following assertions are valid:
	\begin{description}
		\item[\rm(a)] The weak Pareto solution set is connected;
		\item[\rm(b)] The domain of the basic multifunction coincides with $\Delta$.
	\end{description}
\end{theorem}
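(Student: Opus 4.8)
The plan is to obtain (a) as an immediate consequence of Theorem~\ref{disconnected_w} and then to bootstrap (b) from (a) by exploiting the closed-and-open dichotomy for the inverse image $S^{-1}(\A)$ of the (now unique) connected component $\A$.

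For part (a), I would argue by contradiction. Suppose $\Sol^w(F,K)$ is bounded and nonempty but disconnected. Then Theorem~\ref{disconnected_w} applies and asserts that every connected component of $\Sol^w(F,K)$ is unbounded. On the other hand, each connected component is a subset of $\Sol^w(F,K)$, which is bounded by hypothesis, so every component is bounded. This contradiction forces $\Sol^w(F,K)$ to be connected. This step is essentially immediate once Theorem~\ref{disconnected_w} is in hand.

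For part (b), the key observation is that once (a) is established, the entire set $\A:=\Sol^w(F,K)$ is itself a single connected component. Since $\A$ is closed by Lemma~\ref{A_closed} and bounded by hypothesis, it is compact. I would then invoke the two structural results for this one set: Proposition~\ref{A_compact} gives that $S^{-1}(\A)$ is closed in $\Delta$, while Proposition~\ref{A_open}(a), applicable precisely because $\A$ is a bounded connected component, gives that $S^{-1}(\A)$ is open in $\Delta$. As $\Delta$ is connected and $S^{-1}(\A)$ is nonempty, the clopen subset $S^{-1}(\A)$ must equal all of $\Delta$.

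The only point requiring care is the identification $S^{-1}(\A)=\dom S$. By the scalarization formula in Theorem~\ref{thm_scalarization}, every $\Sol(F_\xi,K)=S(\xi)$ is contained in $\Sol^w(F,K)=\A$, so $S(\xi)\cap\A=S(\xi)$ for every $\xi\in\Delta$; consequently $S^{-1}(\A)=\{\xi\in\Delta:S(\xi)\neq\emptyset\}=\dom S$, and nonemptiness of $\Sol^w(F,K)$ guarantees $\dom S\neq\emptyset$. Combining this with $S^{-1}(\A)=\Delta$ yields $\dom S=\Delta$, which proves (b). The main obstacle is thus not analytic but one of bookkeeping: being precise that the single connected component $\A$ coincides with the whole solution set, so that Proposition~\ref{A_compact} and Proposition~\ref{A_open} can be applied simultaneously to the same set, and that the inverse image of this set is exactly the domain of the basic multifunction.
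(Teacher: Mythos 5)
Your proposal is correct and follows essentially the same route as the paper: part (a) by contradiction from Theorem~\ref{disconnected_w}, and part (b) by viewing $\Sol^w(F,K)$ as a single compact connected component and applying Proposition~\ref{A_compact} and Proposition~\ref{A_open} to conclude that $S^{-1}(\Sol^w(F,K))$ is clopen, hence all of $\Delta$ by connectedness. Your only addition is to spell out the identification $S^{-1}(\Sol^w(F,K))=\dom S$ via the scalarization formula, a bookkeeping step the paper leaves implicit.
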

\begin{proof}
	The assertion $\rm(a)$ follows immediately from Theorem \ref{disconnected_w}. The task is now to prove $\rm(b)$.  Since $\Sol^w(F,K)$ is connected and compact, according to Proposition \ref{A_compact} and Proposition \ref{A_open}, the inverse image $S^{-1}(\Sol^w(F,K))$ is closed and open in $\Delta$. Hence, one has $S^{-1}(\Sol^w(F,K))=\Delta.$ The second assertion has been proved.
\end{proof}
\begin{remark}
	In \cite{LY2000}, Lee and Yen have proved that if the constraint set $K$ is bounded then the weak Pareto solution set is connected. This is a special case of the assertion $\rm(a)$ in Theorem \ref{connected_w}.
\end{remark}

\subsection{Proper Pareto solution sets}

\begin{theorem}\label{disconnected_pr} Assume that $\VVI(F,K)$ is monotone. The following assertions are valid:
	\begin{description}
		\item[\rm(a)] If the proper Pareto solution set is disconnected, then each connected component of this set is unbounded;
		\item[\rm(b)] If the proper Pareto solution set  is bounded and nonempty, then this set is connected.
	\end{description}
\end{theorem}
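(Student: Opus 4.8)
The plan is to mirror the proof of Theorem \ref{disconnected_w}, replacing the connected set $\Delta$ by its relative interior $\ri\Delta$ (which, being a nonempty convex set, is connected) and working with the restricted inverse image $T=S^{-1}(\A)\cap\ri\Delta$. For part (a), suppose $\Sol^{pr}(F,K)$ is disconnected, let $\A,\B$ be two distinct connected components, and argue by contradiction assuming $\A$ is bounded. First I would note that $T\neq\emptyset$: since $\A\subset\Sol^{pr}(F,K)=S(\ri\Delta)$, any point of $\A$ lies in some $S(\xi)$ with $\xi\in\ri\Delta$, so $\xi\in T$. Openness of $T$ in $\ri\Delta$ is supplied directly by Proposition \ref{A_open}(b). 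The goal is then to prove that $T$ is also closed in $\ri\Delta$; once this is done, connectedness of $\ri\Delta$ forces $T=\ri\Delta$, and choosing $\eta\in\ri\Delta$ with $S(\eta)\cap\B\neq\emptyset$ gives $\eta\in S^{-1}(\A)\cap S^{-1}(\B)$, contradicting Proposition \ref{AB_disjoint}.

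The hard part will be the closedness of $T$ in $\ri\Delta$. Here one cannot simply invoke Proposition \ref{A_compact}, because that result needs $\A$ to be compact, whereas a bounded connected component of $\Sol^{pr}(F,K)$ need not be closed in $\R^n$ (the proper Pareto solution set itself may fail to be closed). Instead I would argue directly: take $\{\xi^k\}\subset T$ with $\xi^k\to\bar\xi$ and $\bar\xi\in\ri\Delta$, pick $x^k\in S(\xi^k)\cap\A$, and use boundedness of $\A$ to pass to a subsequence with $x^k\to\bar x$. Letting $k\to\infty$ in $\langle F_{\xi^k}(x^k),y-x^k\rangle\geq 0$ yields $\bar x\in S(\bar\xi)$, and since $\bar\xi\in\ri\Delta$ this places $\bar x\in S(\ri\Delta)=\Sol^{pr}(F,K)$.

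The remaining point is to verify that $\bar x$ actually lies in $\A$, not merely in $\cl(\A)$; this is exactly where the restriction $\bar\xi\in\ri\Delta$ is essential. It is settled by a general-topology fact: a connected component of any space is closed in that space, so $\A$ is closed in the subspace $\Sol^{pr}(F,K)$, i.e. $\cl(\A)\cap\Sol^{pr}(F,K)=\A$. Since $\bar x\in\cl(\A)$ and $\bar x\in\Sol^{pr}(F,K)$, we conclude $\bar x\in\A$, whence $S(\bar\xi)\cap\A\neq\emptyset$ and $\bar\xi\in T$. This proves that $T$ is closed in $\ri\Delta$ and completes the contradiction argument for (a). Finally, part (b) is immediate from (a): if $\Sol^{pr}(F,K)$ were bounded, nonempty, and disconnected, then by (a) each of its connected components would be unbounded, which is impossible for components of a bounded set; hence the set is connected.
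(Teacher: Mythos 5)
Your proposal is correct and follows essentially the same route as the paper: nonemptiness and openness of the inverse image of $\A$ in $\ri\Delta$ via Proposition \ref{A_open}(b), closedness in $\ri\Delta$, and then the clopen-subset-of-connected-$\ri\Delta$ argument combined with Proposition \ref{AB_disjoint} to reach a contradiction, with (b) following from (a). The only difference is organizational: where you prove closedness of $T=S^{-1}(\A)\cap\ri\Delta$ by a direct sequential argument together with the fact that a connected component is closed in its ambient space, the paper gets the same conclusion by applying Proposition \ref{A_compact} to the compact set $\cl(\A)$ and proving the identity $S^{-1}(\A)=S^{-1}(\cl(\A))\cap\ri\Delta$; the underlying point — that a limit point of $\A$ which solves $\VI(F_{\bar\xi},K)$ with $\bar\xi\in\ri\Delta$ is a proper Pareto solution and hence must lie in $\A$ — is identical in both arguments.
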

\begin{proof}  Since the second assertion follows obviously the first assertion, we need only to prove $\rm(a)$. Suppose that $\A$ is a connected component of $\Sol^{pr}(F,K)$. From \eqref{Sol_pr} and \eqref{S_1}, it is easy to check that
	\begin{equation}\label{S_12}
	S^{-1}(\A)=S^{-1}(\A)\cap\ri\Delta.
	\end{equation}

Suppose $\Sol^{pr}(F,K)$ is disconnected and, on the contrary, $\A$ is bounded. According to Proposition \ref{A_open}, $S^{-1}(\A)$ is open in $\ri\Delta$.
By the boundedness of $\A$, the closure $\cl(\A)$ is compact and $\cl(\A)\subset \Sol^w(F,K)$. Proposition~\ref{A_compact} asserts that $S^{-1}(\cl(\A))$ is closed.

We claim that
	\begin{equation}\label{cl_A}
	S^{-1}(\A)=S^{-1}(\cl(\A))\cap \ri\Delta.
	\end{equation}
	Indeed, because of $\A\subset\cl(\A)$ one has  $S^{-1}(\A)\subset S^{-1}(\cl(\A))$. From the last result and \eqref{S_12}, one has
	$$S^{-1}(\A) \; \subset \; S^{-1}(\cl(\A))\cap \ri\Delta.$$
	To prove the inverse conclusion, we take $\xi$ in the right-hand side of \eqref{cl_A}, it follows that $\xi\in \ri\Delta$ and $S(\xi)\cap\cl(\A) $ is nonempty. Because $\A$ is a connected component, $S(\xi)$ must be a subset of $\A$. This implies that $\xi\in S^{-1}(\A)$, and hence \eqref{cl_A} is obtained. From \eqref{cl_A}, by the closedness of $S^{-1}(\cl(\A))$, $S^{-1}(\A)$ is closed in $\ri\Delta$.

	Combining these results, $S^{-1}(\A)$ not only is closed but also is open in $\ri\Delta$. Because of the connectedness of $\ri\Delta$, one has  $S^{-1}(\A)=\ri\Delta$.

	Assume that $\B$ is a different connected components of $\Sol^{pr}(F,K)$. Proposition \ref{AB_disjoint} yields
	$S^{-1}(\A)\cap S^{-1}(\B)= \emptyset.$
	This leads to the contradiction
	$$\emptyset=\ri\Delta\setminus S^{-1}(\A)\supset S^{-1}(\B)\neq \emptyset.$$
	So, $\A$ must be unbounded, and the proof is complete.
\end{proof}

\begin{theorem}\label{disconnected} Assume that $K$ is polyhedral convex and $\VVI(F,K)$ is monotone. The following assertions are valid:
	\begin{description}
		\item[\rm(a)] If the Pareto solution set is disconnected, then each connected component of this set is unbounded.
		\item[\rm(b)] If the Pareto solution set is bounded and nonempty, then this set is connected.
\end{description} \end{theorem}
\begin{proof}  Since $K$ is polyhedral convex, according to Theorem \ref{thm_scalarization}, the Pareto solution set coincides with the proper Pareto solution set, i.e., $\Sol(F,K)=\Sol^{pr}(F,K)$. Therefore, the two assertions are implied from Theorem \ref{disconnected_pr}.
\end{proof}

\section{Examples and open questions}\label{sec_question}
The problem $\VVI(F,K)$ is a \textit{symmetric} vector variational inequality if, for each $l\in\{1,\dots,m\}$, the vector-function $F_l$ satisfies the following conditions:
$$
\frac{\partial F_{li}(x)}{\partial x_j}=\frac{\partial F_{lj}(x)}{\partial x_i}\quad\;\forall x\in K,\ \forall i, j\in \{1,\dots,n\},
$$
where $F_{li}(x)$ is the $i$-th component of $F_l(x)$. Besides,
the problem is a \textit{skew-symmetric} vector variational inequality if the following property is available for each $l\in\{1,\dots,m\}$:
$$
\frac{\partial F_{li}(x)}{\partial x_j}=-\frac{\partial F_{lj}(x)}{\partial x_i}\quad\;\forall x\in K,\ \forall i, j\in \{1,\dots,n\}.$$

Note that linear fractional vector optimization
problems and convex quadratic vector optimization problems which are two important classes of vector optimization problems can be treated as skew-symmetric affine variational inequalities and symmetric monotone affine variational inequalities (see, e.g., \cite{YP2000,YY2011}).

\subsection{Examples}

\begin{example}	Consider the unconstrained bicriteria variational inequality problem $(Q)$ which is defined by $K=\R^2$ and \begin{equation}\label{Q_F12}
	F_1(x)=\begin{bmatrix}
	x_1^3\\
	x_2^3-1
	\end{bmatrix}, \  F_2(x)=\begin{bmatrix}
	x_1^3-1\\
	x_2^3
	\end{bmatrix}.
	\end{equation}
	Clearly,  the Jacobian matrices of $F_1$ and $F_2$ are symmetric positive semidefinite
	$$DF_1(x)=DF_2(x)=\begin{bmatrix}
	3x_1^2&0\\
	0&3x_2^2
	\end{bmatrix}.$$
	Hence, $F_1, F_2$ are monotone on $\R^2$. For each $\xi\in\Delta$, where
	$$\Delta=\{(\xi_1,1-\xi_1)\in\R^2:\xi_1\in[0,1]\},$$
	$x$ is a solution of $\VI(F_{\xi},\R^2)$ if and only if $x$ is a solution of the equation
	$$\xi_1F_1(x)+(1-\xi_1)F_2(x)=0.$$
	From \eqref{Q_F12}, one has
	\begin{equation*}
	\begin{bmatrix}
	x_1^3\\
	x_2^3
	\end{bmatrix}=\begin{bmatrix}
	1-\xi_1\\
	\xi_1
	\end{bmatrix}.
	\end{equation*}
	By the relation \eqref{scalarization}, the weak Pareto solution set is given by
	$$\Sol^w(\Q)=\left\lbrace \left( \sqrt[3]{1-\xi_1},\sqrt[3]{\xi_1}\right) :\xi_1\in \left[ 0,1\right]   \right\rbrace.$$
It is easy to see that $\Sol^w(\Q)$ is bounded. Theorem \ref{connected_w} asserts that this set is connected and the domain of the basic multifunction coincides with $\Delta$.

	\begin{center}
		\begin{tikzpicture}
		\begin{axis}[axis lines = center,
		xlabel = $x_1$,
		ylabel = $x_2$,]
		\addplot [very thick, domain=0:1,
		samples=200,
		color=black,]{(1-x^3)^(1/3)};
		\addlegendentry{$\Sol^w(\Q)$}
		\end{axis}
		\end{tikzpicture}

		{\textbf{Figure 1} \ The weak Pareto solution set $\Sol^w(\Q)$}
	\end{center}

	Since $K=\R^2$, according to Theorem \ref{thm_scalarization}, the Pareto solution set coincides with the proper Pareto solution set. This means that
	$$\Sol(\Q)=\Sol^{pr}(\Q)=\left\lbrace \left( \sqrt[3]{1-\xi_1},\sqrt[3]{\xi_1}\right) :\xi_1\in \left( 0,1\right)   \right\rbrace.$$
	The set $\Sol(\Q)$ is bounded. By Theorem \ref{disconnected}, this one is connected.
\end{example}

\begin{example}\label{example2}
	Consider the unconstrained bicriteria variational inequality problem $(\Po)$ which is defined by $K=\R^2$ and
	\begin{equation}\label{F12}
	F_1(x)=\begin{bmatrix}
	-x_2-1\\
	x_1^3-1
	\end{bmatrix}, \ F_2(x)=\begin{bmatrix}
	x_2-1\\
	-x_1^3-1
	\end{bmatrix}.
	\end{equation}
	The Jacobian matrices of $F_1$ and $F_2$ are non-symmetric positive semidefinite
	$$DF_1(x)=\begin{bmatrix}
	0&-1\\
	3x_1^2&0
	\end{bmatrix}, \ DF_2(x)=\begin{bmatrix}
	0&1\\
	-3x_1^2&0
	\end{bmatrix}.$$
	Hence, each of $F_1, F_2$ is monotone on $\R^2$. For each $\xi\in\Delta$, $x$ is a solution of $\VI(F_{\xi},\R^2)$ if and only if $x$ is a solution of the equation
	$$\xi_1F_1(x)+(1-\xi_1)F_2(x)=0.$$
	From \eqref{F12}, one has
	$$\begin{bmatrix} 	(1-2\xi_1)x_2\\
	(2\xi_1-1)x_1^3
	\end{bmatrix}=\begin{bmatrix} 1 \\
	1
	\end{bmatrix}.$$
Clearly, the basic multifunction of $(\Po)$ is given by
	$$S(\xi_1,1-\xi_1)=\left\{\begin{array}{cl}
	\Big\{\big(\frac{1}{\sqrt[3]{2\xi_1-1}},\frac{-1}{{2\xi_1-1}}\big)\Big\} & \quad \hbox{ if } \ \ \xi_1\in \big[ 0,\frac{1}{2}\big) \cup \big( \frac{1}{2},1\big],  \\
	\emptyset & \quad \hbox{ if } \ \ \xi_1=\frac{1}{2}.
	\end{array}\right.$$
	So, the weak Pareto solution set is determinated as
	$$\Sol^w(\Po)=\left\lbrace 	\big(\frac{1}{\sqrt[3]{2\xi_1-1}},\frac{-1}{{2\xi_1-1}}\big) :\xi_1\in \big[ 0,\frac{1}{2}\big) \cup \big(\frac{1}{2},1\big]   \right\rbrace.$$
	Clearly, the weak Pareto solution set has two connected components. According to Theorem \ref{disconnected_w}, each component of this set is unbounded.

	\begin{center}
		\begin{tikzpicture}
		\begin{axis}[axis lines = center, 	xlabel = $x_1$,	ylabel = $x_2$,]
		\addplot [thin, dashed, domain=-1.5:1.5, samples=100, color=gray]{-x^3};
		\addplot [very thick, domain=-1.5:-1, 		samples=1000, 		color=black]{-x^3};
		\addplot [very thick, domain=1:1.5, 		samples=1000, 		color=black]{-x^3};
		\addlegendentry{$x_2 =-x^3_1$}
		\addlegendentry{$\Sol^w(\Po)$}
		\end{axis}
		\end{tikzpicture}

		{\textbf{Figure 2} \ The weak Pareto solution set $\Sol^w(\Po)$}
	\end{center}

	Because the constraint set is $\R^n$,  $\Sol(\Po)$ coincides with $\Sol^{pr}(\Po)$. Hence,
	the Pareto solution set $\Sol(\Po)$ is given by
	$$\Sol(\Po)=\Big\lbrace 	\big(\frac{1}{\sqrt[3]{2\xi_1-1}},\frac{-1}{{2\xi_1-1}}\big) :\xi_1\in \big(0,\frac{1}{2}\big) \cup \big(\frac{1}{2},1\big)     \Big\rbrace.$$
	This one has two unbounded connected components.
\end{example}

\subsection{Open questions}

We conclude this section by two open questions.

\begin{question} Is it true that if the Pareto solution set of a monotone VVI is disconnected, then  this set is unbounded, or not?
\end{question}

\begin{question} Is it true that if the weak Pareto solution set (or, the Pareto solution set, the proper Pareto solution set) of a symmetric monotone VVI is nonempty, then this set is connected, or not?
\end{question}

\section*{Acknowledgements}
The author would like to thank Professor Nguyen Dong Yen for encouragement and the anonymous referees for valuable remarks and suggestions.


\section*{Funding}
This research was supported by Vietnam National Foundation for Science and Technology Development (NAFOSTED) [grant number 101.01--2018.306].


\begin{thebibliography}{99}

\bibitem{ChYa90} Chen GY, Yang XQ: The vector complementarity problems and their equivalence with the weak minimal element in ordered spaces. J. Math. Anal. Appl. 1990; 153:136--158.

\bibitem{FaPa03} Facchinei F, Pang J-S: Finite-dimensional variational inequalities and complementarity problems.
	Vols. I and II. Springer-Verlag, New York; 2003.

	\bibitem{G80}  Giannessi F: Theorems of alternative,
	quadratic programs and complementarity problems. In: Cottle RW, Giannessi F, Lions J-L (eds.): Variational	Inequality and Complementarity Problems, Wiley, New York; 1980. p. 151--186.


\bibitem{Hieu19} Hieu VT: Numbers of the connected components of the solution sets of monotone affine vector variational inequalities. J. Glob. Optim. 2019; 73:223--237.

\bibitem{HYY2015b} Huong NTT, Yao J-C, Yen ND: Polynomial vector variational inequalities under polynomial constraints and applications. SIAM J. Optim. 2016; 26:1060--1071.


\bibitem{LKLY98} Lee GM, Kim DS, Lee BS, Yen ND: Vector variational inequalities as a
tool for studying vector optimization problems. Nonlinear Anal. 1998; 34:745--765.

\bibitem{LY01} Lee GM, Yen ND: A result on vector variational inequalities with polyhedral constraint sets. J. Optim. Theory Appl. 2001; 109:193--197.

\bibitem{Robinson79} Robinson SM: Generalised equations and their solutions, part I: Basic theory. Math. Program. Study  1979; 10:128--141.

\bibitem{LY2000} Yen ND, Lee GM: On monotone and strongly monotone vector variational inequalities. In: Giannessi F (eds) Vector Variational Inequalities and Vector Equilibria. Nonconvex Optimization and Its Applications, vol 38. Springer, Boston(MA); 2000. p. 467--478.

\bibitem{YP2000} Yen ND, Phuong TD: Connectedness and stability of the solution sets in linear fractional vector optimization problems. In: Giannessi F (eds) Vector Variational Inequalities and Vector Equilibria. Nonconvex Optimization and Its Applications, vol 38. Springer, Boston(MA); 2000. p. 479--489.

\bibitem{YY2011} Yen ND, Yao J-C: Monotone affine vector variational inequalities.  Optimization 2011; 60: 53--68.

\end{thebibliography}
\end{document}